\title{Polynomiality of the double ramification cycle}
\numberwithin{equation}{subsection}
\newcommand*{\doublerightarrow}[2]{\mathrel{
  \settowidth{\@tempdima}{$\scriptstyle#1$}
  \settowidth{\@tempdimb}{$\scriptstyle#2$}
  \ifdim\@tempdimb>\@tempdima \@tempdima=\@tempdimb\fi
  \mathop{\vcenter{
    \offinterlineskip\ialign{\hbox to\dimexpr\@tempdima+1em{##}\cr
    \rightarrowfill\cr\noalign{\kern.5ex}
    \rightarrowfill\cr}}}\limits^{\!#1}_{\!#2}}}
\newcommand*{\triplerightarrow}[1]{\mathrel{
  \settowidth{\@tempdima}{$\scriptstyle#1$}
  \mathop{\vcenter{
    \offinterlineskip\ialign{\hbox to\dimexpr\@tempdima+1em{##}\cr
    \rightarrowfill\cr\noalign{\kern.5ex}
    \rightarrowfill\cr\noalign{\kern.5ex}
    \rightarrowfill\cr}}}\limits^{\!#1}}}
\newcommand{\ca}[1]{{\mathcal{#1}}}
\newcommand{\Mbar}{\overline{\ca M}}
\newcommand{\DRL}{\operatorname{DRL}}
\newcommand{\DR}{\operatorname{DR}}
\DeclareMathOperator{\Aut}{Aut}
\DeclareMathOperator{\CH}{CH}
\newcommand{\G}{\mathbb{G}}
\newcommand{\N}{\mathbb{N}}
\renewcommand{\P}{\mathbb{P}}
\newcommand{\Q}{\mathbb{Q}}
\newcommand{\Z}{\mathbb{Z}}
\newcommand{\Mcal}{\mathcal{M}}
\newcommand{\Ocal}{\mathcal{O}}
\tikzset{
    labl/.style={anchor=south, rotate=-90, inner sep=.5mm}
}
\theoremstyle{definition}
\newtheorem{definition}{Definition}[section]
\newtheorem{remark}[definition]{Remark}
\theoremstyle{plain}
\newtheorem{proposition}[definition]{Proposition}
\newtheorem{lemma}[definition]{Lemma}
\newtheorem{theorem}[definition]{Theorem}
\newtheorem{corollary}[definition]{Corollary}
\newtheorem{intheorem}{Theorem}
\theoremstyle{remark}
\LetLtxMacro{\phiorig}{\phi}
\renewcommand{\phi}{\varphi}
\DeclarePairedDelimiter\floor{\lfloor}{\rfloor}
\author{Pim Spelier}
\date{\today}
\newcounter{nootje}
\newcommand{\beq}{\begin{equation}}
\newcommand{\eeq}{\end{equation}}
\newcommand{\beqs}{\begin{equation*}}
\newcommand{\eeqs}{\end{equation*}}
\tikzset{
  symbol/.style={
    draw=none,
    every to/.append style={
      edge node={node [sloped, allow upside down, auto=false]{$#1$}}}
  }
}
\begin{document}
\maketitle

\begin{abstract}
Let $A = (a_1,\dots,a_n)\in \Z^n$ be a sequence with sum $k(2g-2+n)$. The double ramification cycle $\DR_g(A) \in \CH^g(\Mbar_{g,n})$ is the virtual class of the locus of curves $(C,p_1,\dots,p_n)$ where the line bundle $(\omega_C^{\log})^{-k}\left(\sum a_i p_i\right)$ is trivial. Although there has long been a formula for $\DR_g(A)$ \cite{Janda2016Double-ramifica}, the exact dependence on $A$ was unknown for a long time, though it was conjectured to be polynomial in $A$. A proof was announced in \cite{Janda2016Double-ramifica}, and Pixton gave a proof incorporating ideas of Zagier in \cite{Pixton2023DRPoly}. Here we present an alternative proof of the polynomiality of the double ramification cycle.
\end{abstract}


\section{Introduction}
\label{sec:intro}
Let $\Mcal_{g,n}$ be the moduli space of smooth curves $(C/S,p_1,\dots,p_n)$ of genus $g$ with $n$ distinct markings. Let $A = (a_1,\dots,a_n)\in \Z^n$ be a sequence with sum $0$. Then the \emph{double ramification locus} is
\[
\DRL_g(A) = \left\{(C/S, p_1,\dots,p_n) : \Ocal_C\left(\sum a_i p_i\right) \text{ is fiberwise trivial}\right\} \subset \Mcal_{g,n}.
\]
Equivalently, this is the locus of curves $C$ with a rational function $f: C \to \P^1$ with specified ramification profile above $0$ and $\infty$.

In 2001 Eliashberg asked the question of how to compactify this substack, and how to compute the compactification. Compactifying the substack was first done in \cite{Graber2003Hodge-integrals}, using stable maps to $\P^1$ modulo the $\G_m$ action. They defined the \emph{double ramification cycle} \[\DR_g(A) \in \CH^g(\Mbar_{g,n})\] as the virtual class of this compactification. There are other equivalent definitions, using birational geometry of $\Mbar_{g,n}$ \cite{Holmes2017Extending-the-d} or logarithmic geometry \cite{Marcus2017Logarithmic-com}. These latter definitions also generalise to the \emph{twisted double ramification cycle}
\[
\DR_g(A)
\]
for a sequence of integers $A \in \Z^n$ summing to $k(2g-2+n)$ for some $k \in \Z$. This double ramification cycle is the virtual class of the compactification of the locus
\[
\left\{(C/S, p_1,\dots,p_n) : (\omega_C^{\log})^{-k}\left(\sum a_i p_i\right) \text{ is fiberwise trivial}\right\}.
\]
The double ramification cycle has connections with ordinary Gromov--Witten invariants through the localisation formula \cite{Janda2016Double-ramifica,Ranganathan2023Logarithmic-Gromov-Witten}. It also has ties with the world of PDE's \cite{Buryak2015Double-ramifica,Buryak2019Quadratic-doubl} and to gauge theory \cite{Frenkel2009Gromov-Witten-g}. It has also been used to find relations in the tautological ring \cite{Clader2018PixtonFormulaRelations}.

Pixton conjectured a formula for the double ramification cycle, and in 2014 this was proven in \cite{Janda2016Double-ramifica} for the case $k = 0$, thereby answering the second part of Eliashberg's question. They define for every integer $r \in \Z_{\geq 1}$ an explicit class
\[
P_g^r(A) \in \CH^g(\Mbar_{g,n}),
\]
in terms of decorated strata multiplied by certain numbers associated to the combinatorics of the graph and the integers $a_1,\dots,a_n$ and $r$. 

They prove that $P_g^r(A)$ is polynomial in $r$ for $r$ large enough. Then they define $P_g^0(A)$ to be the constant term of this polynomial expression, and they prove the equality \[\DR_g(A) = P_g^0(A).\] In \cite{Bae2020Pixtons-formula} this result was extended to a formula for the twisted double ramification cycle.

These two results were major breakthroughs, but a very simple questions remained open: what is the behaviour of $\DR_g(A)$ in terms of $A$? In \cite{Janda2016Double-ramifica} the double ramification cycle was conjectured to be polynomial in $A$, but this is a deceptively difficult question, as the formula itself has no obvious polynomial dependency on $A$. A proof was announced in \cite{Janda2016Double-ramifica}. A proof incorporating ideas of Zagier was made public in 2023 \cite{Pixton2023DRPoly}. In this paper, we present a different proof of the same theorem.

\begin{intheorem}[\Cref{thm:drpoly}]
\label{inthm:drpoly}
Fix $g,n$. The cycle $\DR_g(a_1,\dots,a_n) \in \CH^g(M_{g,n})$ is a polynomial in $(a_1,\dots,a_n) \in \Z^n$, where we require that $(2g-2+n)\mid \sum a_i$.
\end{intheorem}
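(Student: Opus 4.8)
\emph{Strategy.} The plan is to run Hain's compact-type computation of the double ramification cycle over the whole of $\Mbar_{g,n}$, using logarithmic geometry to make sense of it, and to read off polynomiality from the fact that a theta divisor varies \emph{quadratically} along the group law of the Jacobian. Recall that on $\Mcal_{g,n}^{\mathrm{ct}}$ one has $\DR_g(A) = \xi_A^g/g!$ for an explicit divisor class $\xi_A \in \CH^1(\Mcal_{g,n}^{\mathrm{ct}})$ which is a quadratic form in $A$; the obstruction to using this over $\Mbar_{g,n}$ is that $\Pic^{\ul 0}$ of the universal curve is no longer an abelian scheme and the Abel--Jacobi section no longer extends. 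The logarithmic world repairs both problems: the logarithmic Jacobian $\Pic^{\ul 0}_{\ca C/\Mbar}$ is a group over $\Mbar_{g,n}$, the twisted Abel--Jacobi section $\sigma_A$ classifying $(\omega^{\log})^{-k}(\sum a_i p_i)$ extends to a logarithmic section $\sigma_A\colon \Mbar_{g,n}\to\Pic^{\ul 0}_{\ca C/\Mbar}$, and --- by the comparison between the logarithmic and the Holmes / Marcus--Wise definitions \cite{Holmes2017Extending-the-d,Marcus2017Logarithmic-com} --- $\DR_g(A)$ is the pushforward to $\CH^g(\Mbar_{g,n})$ of $\sigma_A^*[e]$, where $[e]$ is the class of the identity section and the pullback is computed in $\bChow(\Mbar_{g,n})$, the colimit of Chow groups over all logarithmic blow-ups of $\Mbar_{g,n}$. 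This differs from the combinatorial route of \cite{Pixton2023DRPoly}.

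\emph{Theorem of the square.} Granting a theory of theta divisors on the logarithmic Jacobian, fix a symmetric principal theta divisor $\Theta$, normalised so that $e^*\Theta = 0$, for which the Poincar\'e formula $[e] = \Theta^g/g!$ holds in $\bChow$. The assignment $s\mapsto s^*\Theta$ from sections of $\Pic^{\ul 0}_{\ca C/\Mbar}$ to $\bChow^1(\Mbar_{g,n})$ then satisfies the parallelogram law $(s+t)^*\Theta = s^*\Theta + t^*\Theta + \Lambda(s,t)$ with $\Lambda$ bilinear (the symmetrised Poincar\'e bundle). Since, via the group law, $\sigma_A = \sum_i a_i\,\sigma_{p_i} - k\,\sigma_{\omega^{\log}}$ is an integral linear combination of the \emph{fixed} sections $\sigma_{p_i}$ and $\sigma_{\omega^{\log}}$, iterating the parallelogram law gives
\[
\sigma_A^*\Theta \;=\; \sum_{i\le j} c_{ij}\, a_i a_j \;+\; (\text{lower-order terms in } A) \;\in\; \bChow^1(\Mbar_{g,n}),
\]
with $c_{ij}$ fixed classes built from $\sigma_{p_i}^*\Theta$ and $\Lambda(\sigma_{p_i},\sigma_{p_j})$. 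Therefore
\[
\DR_g(A) \;=\; \nu_*\!\left(\frac{1}{g!}\,(\sigma_A^*\Theta)^g\right)
\]
is a polynomial in $A$ of degree at most $2g$ --- note that the left-hand side, unlike the representative $\nu$ used on the right, is intrinsic.

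\emph{Where the work is.} The difficulty lies entirely in justifying the previous paragraph inside $\bChow$. One must (i) set up the theta divisor on the logarithmic Jacobian, prove the theorem of the square for it --- which should follow from the group structure once the symmetrised Poincar\'e bundle is pinned down --- and, the heaviest point, establish $[e]=\Theta^g/g!$ in $\bChow(\Mbar_{g,n})$; and (ii) check that $\sigma_A^*\Theta$ is genuinely an element of $\bChow^1(\Mbar_{g,n})$, i.e. a tautological class plus a piecewise-linear function on the tropicalisation of $\Mbar_{g,n}$, depending polynomially on $A$. A more hands-on route to (i)--(ii) avoids log abelian varieties: represent the logarithmic double ramification cycle directly by a piecewise polynomial on the moduli space of tropical curves --- the multidegree-zero twist of the universal line bundle is cut out by an explicit piecewise-linear function of $A$ --- and perform the theorem-of-the-square manipulation there, between piecewise polynomials.

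\emph{The main obstacle.} Either way, the subtle point is that the logarithmic blow-up needed to resolve $\sigma_A$, and hence the combinatorial shape of the piecewise-polynomial representative, jumps as $A$ crosses the walls of a fan in $\Z^n$; cone by cone one only sees local data that is piecewise-polynomial, not polynomial, in $A$. The content of the argument --- and what I expect to be the hardest part --- is to show that these local pieces are forced to assemble, after passing to $\bChow$ and pushing forward to $\CH^g(\Mbar_{g,n})$, into a single honest polynomial in $A$: reconciling the $A$-dependence of the resolution with the $A$-independence of the answer. A variant of the whole scheme replaces the theta computation by Grothendieck--Riemann--Roch applied to $c_g(-R\pi_*\ca L_A)$ for the corrected universal bundle $\ca L_A$ of uniform multidegree zero; since $c_1(\ca L_A)$ is, up to the same piecewise-linear twist, linear in $A$, the mechanism producing a polynomial and the obstacle obstructing it are unchanged.
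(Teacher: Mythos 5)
Your proposal takes a genuinely different route from the paper --- the paper works entirely combinatorially, expressing $\DR_g(A)$ via Pixton's formula \cref{eq:pixton} and reducing polynomiality to \Cref{thm:sumspolynomials}, the statement that the constant terms $S^\Gamma_{A,0}(Q)$ of the weighting sums are polynomial in $A$, which it proves by induction on the number of edges (splitting at a separating edge, and translating weights across a non-separating edge). But as written your argument has a genuine gap, and you have in fact located it yourself: the ``Therefore'' at the end of your theorem-of-the-square paragraph is not justified. Iterating the parallelogram law to expand $\sigma_A^*\Theta$ presupposes that the classes $\sigma_{p_i}^*\Theta$ and $\Lambda(\sigma_{p_i},\sigma_{p_j})$, and the ambient model of $\Mbar_{g,n}$ on which the identity $(s+t)^*\Theta = s^*\Theta + t^*\Theta + \Lambda(s,t)$ holds, can be chosen once and for all. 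In the logarithmic setting they cannot: the group law on the logarithmic Jacobian and the pullback $\sigma_A^*\Theta$ are only defined after subdivisions that depend on $A$, so the ``fixed classes'' $c_{ij}$ are a priori only locally constant on the cones of an $A$-dependent fan. This is exactly why the logarithmic class is known to be merely \emph{piecewise} polynomial in $A$; the passage from piecewise polynomial to polynomial after pushing forward to $\CH^g(\Mbar_{g,n})$ is the entire content of the theorem, not a formal consequence of the setup. Your last paragraph concedes this (``what I expect to be the hardest part''), which means the proposal is a research programme rather than a proof.

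A second, independent gap is step (i): the existence of a symmetric theta divisor on the logarithmic Jacobian satisfying the Poincar\'e formula $[e]=\Theta^g/g!$ in $\bChow(\Mbar_{g,n})$, together with the theorem of the square there, is asserted (``granting a theory of theta divisors'') but not established, and neither statement is available off the shelf; outside compact type the logarithmic Jacobian is not an abelian scheme and the classical proofs do not transfer. By contrast, the paper's route needs no such machinery: once Pixton's formula is taken as input, everything reduces to the elementary (though delicate) arithmetic of \Cref{prop:separating} and \Cref{prop:nonseparating}. If you want to pursue your approach, the concrete task is to prove that the piecewise-polynomial dependence on $A$ of the local data becomes polynomial after pushforward --- which is precisely the theorem you set out to prove, approached from the other end.
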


We use different techniques to prove this statement. Our methods are explicit and can be recursively used to give a polynomial expression for the double ramification cycle.

In \ref{sec:drdefinitions} we recall the combinatorics necessary to state Pixton's formula. In \ref{sec:pixton} we give a brief recap of Pixton's formula. In \ref{sec:sums} we prove the main technical result, a certain polynomiality statement for sums of weightings on graphs. In \ref{sec:drpoly} we finally give the proof of \ref{inthm:drpoly}.

\section{Definitions}
\label{sec:drdefinitions}

\begin{definition}
We denote by $G_{g,n}$ the set of graphs
\[
\Gamma = (V,H,L = (\ell_i)_{i=1}^n, r: H \to V, i: H \to H, g: V \to \N)
\]
where
\begin{enumerate}
  \item $V$ is the set of vertices;
  \item $H$ is the set of half edges;
  \item $i$ is an involution on $H$.
  \item $r$ assigns to every half edge the vertex it is incident to.
  \item $L = (\ell_i)_{i=1}^n \subset H$ is a list of the legs, the fixed points of the involution $i$;
  \item $(V,H)$ is a connected graph;
  \item $g(v)$ is the genus of vertex $v$;
  \item for every vertex $v$ we have $2g(v)-2 + n(v) > 0$ where $n(v)$ is the number of half edges incident to $v$;
  \item the genus $\sum_{v \in V} g(v) + h^1(\Gamma)$ of the graph is $g$.
\end{enumerate}
For such a graph, we denote by $E$ the set of edges, i.e. the set of pairs $\{h,h'\}$ of half edges with $h = i(h')$ and $h \neq h'$.
\end{definition}

Then $G_{g,n}$ parametrises the strata of $\Mbar_{g,n}$. Every graph $\Gamma \in G_{g,n}$ corresponds to a moduli space $\Mbar_{\Gamma} = \prod_{v \in V(\Gamma)} \Mbar_{g(v),n(v)}$ together with a glueing map $\zeta_{\Gamma}: \Mbar_{\Gamma} \to \Mbar_{g,n}$. 

\begin{definition}
We fix a vector $A \in \Z^n$ with sum $k(2g-2+n)$. A \emph{weighting for $A$} on a graph $\Gamma \in G_{g,n}$ is a map $w: H(\Gamma) \to \Z$ satisfying the following three conditions.
\begin{enumerate}
  \item For $i = 1, \dots, n$ we have $w(\ell_i) = a_i$.
  \item For $h \in H \setminus L$ we have $w(h) + w(i(h)) = 0$.
  \item For every vertex $v$ we have $\sum_{h \colon r(h) = v} w(h) = k(2g(v) - 2 + n(v))$.
\end{enumerate} 
For $r \in \N_{\geq 1}$, an \emph{weighting for $A$} modulo $r$ is a map $w: H(\Gamma) \to \{0,\dots,r-1\} \subset \Z$ satisfying these three conditions modulo $r$. We denote $W^{\Gamma}_{A,r}$ for the finite set of weightings for $A$ mod $r$.
\end{definition}

We fix $g,n$, and we fix a graph $\Gamma \in G_{g,n}$. Let $Q \in \Z[x_{h} : h \in H \setminus L]$ be a polynomial.

\begin{definition}
\label{def:thesum}
For $A = (a_1,\dots,a_n)$ with $\sum a_i = k(2g-2+n)$, consider the sum
\begin{equation}
\label{eq:thesum}
S_{A,r}^\Gamma(Q) = r^{-h_1(\Gamma)} \sum_{w \in W^{\Gamma}_{A,r} } Q(w(h)).
\end{equation}
\end{definition}

In \cite[Appendix~A]{Janda2018Double-ramifica} they prove the following property of these sums.
\begin{proposition}[\protect{\cite[Proposition~3'']{Janda2016Double-ramifica}}]
\label{prop:eventuallypolynomial}
The sum $S_{A,r}^{\Gamma}(Q)$ are eventually polynomial in $r$.
\end{proposition}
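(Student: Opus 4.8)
The plan is to show that the finite sum $S_{A,r}^\Gamma(Q)$ can be evaluated by an explicit lattice-point count that is controlled by the theory of quasi-polynomials (Ehrhart-type counting), and then to rule out the periodic part. First I would parametrize the set $W^\Gamma_{A,r}$ of weightings modulo $r$. Conditions (1) and (2) say that a weighting is determined by choosing one half-edge per edge, with legs fixed to $a_i \bmod r$; so $W^\Gamma_{A,r}$ is a torsor under the choices for the $h^1(\Gamma)$ independent cycles, after imposing the vertex conditions (3). Concretely, fixing a spanning tree $T$ of $\Gamma$, the edges not in $T$ contribute $h^1(\Gamma)$ free variables in $\{0,\dots,r-1\}$, the tree edges are then determined modulo $r$ by the vertex balancing conditions (3) (which are $|V|-1$ independent conditions once we note their sum is automatically zero), and the remaining global condition is satisfied by the hypothesis $(2g-2+n)\mid\sum a_i$. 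Thus $W^\Gamma_{A,r}$ is identified with $(\Z/r)^{h^1(\Gamma)}$, and the map sending the free variables to the vector $(w(h))_{h\in H\setminus L}$ is affine-linear with integer coefficients, with the ``constant term'' being a $\Z$-linear function of $A$.

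The next step is to expand $Q$. Writing $Q$ as a $\Z$-linear combination of monomials $\prod_h x_h^{e_h}$, it suffices to treat each monomial, and by the affine-linear parametrization above each $w(h)$ becomes an affine-linear form $L_h(t_1,\dots,t_{h^1}) = \ell_h(t) + c_h(A)$ in the free variables $t_j\in\{0,\dots,r-1\}$, where $\ell_h$ is linear with coefficients in $\{-1,0,1\}$ and $c_h(A)$ is $\Z$-linear in $A$. So $S_{A,r}^\Gamma(Q)$ is $r^{-h^1(\Gamma)}$ times a sum over the cube $\{0,\dots,r-1\}^{h^1(\Gamma)}$ of a fixed polynomial (depending polynomially on $A$) in these linear forms. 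Such a sum is computed by iterated Faulhaber-type summation: summing a polynomial in $t_j$ over $t_j = 0,\dots,r-1$ produces a polynomial in $r$ (the Bernoulli/Faulhaber polynomials), of degree one higher. Carrying this out in all $h^1(\Gamma)$ variables shows $\sum_{w}Q(w(h))$ is, for $r$ sufficiently large, a polynomial in $r$ of degree at most $h^1(\Gamma) + \deg Q$, after dividing by $r^{h^1(\Gamma)}$ still a polynomial in $r$. The coefficients of this polynomial are themselves polynomials in $A$, which is exactly what is needed downstream.

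The main obstacle — and the reason ``eventually'' appears — is that the naive Faulhaber computation is only valid if the summation ranges are genuinely $\{0,\dots,r-1\}$ with no extra congruence constraints interacting across the cube. In fact the substitution ``determine the tree edges from the others mod $r$'' can introduce floor functions: solving $w(h_{\text{tree}}) \equiv (\text{linear in }t) \pmod r$ and then re-expressing $w(h_{\text{tree}})\in\{0,\dots,r-1\}$ reintroduces a reduction mod $r$, so the forms $L_h$ above are really of the shape $(\text{linear in }t, A) \bmod r$, and $Q$ applied to these is a piecewise-polynomial function on the cube whose regions of polynomiality are cut out by hyperplanes with $r$-dependent (but linearly so) positions. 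This is precisely the Ehrhart quasi-polynomial situation: the count is a quasi-polynomial in $r$, i.e. polynomial on each residue class of $r$ modulo some period $N$. To upgrade ``quasi-polynomial'' to ``eventually polynomial'' one observes that $S_{A,r}^\Gamma(Q)$ has a modular interpretation — it is (a coefficient extraction from) the class $P_g^r(A)$, which by the results of \cite{Janda2016Double-ramifica} agrees with a cycle independent of $r$ for $r$ large — so the leading quasi-polynomial must in fact be a single polynomial; alternatively, and more in the self-contained spirit here, one checks directly that the potential periodicity is killed because the relevant hyperplane arrangements, after clearing denominators, have all their vertices at points whose coordinates are integer linear combinations of $A$ and $r$ with the $r$-coefficient an integer, forcing the chambers to stabilize. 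I expect verifying this stabilization — i.e.\ that the quasi-polynomial is genuinely a polynomial for $r\gg 0$, not merely for $r$ in a fixed residue class — to be the delicate point, and the cleanest route is to cite the Ehrhart-theoretic input and then invoke the $r$-independence of $P_g^r(A)$ for large $r$ to collapse the periods.
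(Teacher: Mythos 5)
The paper does not prove this proposition at all: it is quoted verbatim from \cite[Proposition~3'']{Janda2016Double-ramifica}, with the proof living in the appendix of that paper. So what you are really doing is reconstructing the JPPZ argument, and your reconstruction gets the setup right but stops exactly where the content begins. The parametrization of $W^\Gamma_{A,r}$ as a torsor under $(\Z/r)^{h^1(\Gamma)}$ via a spanning tree, the observation that the tree-edge values are affine-linear forms in the free variables \emph{reduced mod $r$}, and the resulting identification of $\sum_w Q(w(h))$ as a sum of a piecewise-polynomial function over the cube $\{0,\dots,r-1\}^{h^1(\Gamma)}$ with chamber walls moving linearly in $r$ --- all of this is correct and is essentially how the cited proof begins. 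But at that point the honest output of Ehrhart-type reasoning is a \emph{quasi}-polynomial in $r$, and the entire difficulty of Proposition~3'' is showing that the periodic part is trivial for $r\gg 0$. You explicitly flag this as ``the delicate point'' and do not carry it out, so the proof is incomplete at its crux.

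Worse, the first route you propose for closing the gap is unusable. You claim that $P_g^r(A)$ ``agrees with a cycle independent of $r$ for $r$ large.'' That is false: $P_g^r(A)$ is eventually \emph{polynomial} in $r$, not eventually constant --- already for the one-loop graph in genus $1$ the relevant sum is $r^{-1}\sum_{w=0}^{r-1}\tfrac{w(r-w)}{2}=\tfrac{r^2-1}{12}$, genuinely non-constant --- and the eventual polynomiality of $P_g^r(A)$ is precisely Proposition~\ref{prop:eventuallypolynomial}, so invoking it here is circular. (The constant term $P_g^0(A)$ is only \emph{defined} once this proposition is known.) Your second route --- showing directly that the chamber structure stabilizes because the walls, after rescaling by $r$, converge to an integral hyperplane arrangement with lattice vertices --- is the correct idea and is what the actual proof must (and does) make precise, e.g.\ via the identity $x \bmod r = x - r\lfloor x/r\rfloor$ and a careful bookkeeping of the floor terms for $r$ large relative to $|A|$ and $\deg Q$. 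As written, though, that step is an expectation rather than an argument, so the proposal does not yet constitute a proof.
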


This allows them to define the following quantity.
\begin{definition}
\label{def:constantterm}
We denote the constant term of the polynomial expression for $S_{A,r}^{\Gamma}(Q)$ for large $r$ by
\[
S_{A,0}^\Gamma(Q).
\]
\end{definition}

\section{Pixton's formula for the double ramification cycle}
\label{sec:pixton}

Fix $g,n \in \N,k \in \Z$ with $2g - 2 + n > 0$. Fix a sequence $A \in \Z^n$ with sum $k(2g-2+n)$. In this section we will present Pixton's formula for the double ramification cycle $\DR_g(A) \in \CH^*(\Mbar_{g,n})$. The first version of this, valid for $k = 0$, was \cite[Theorem~1]{Janda2016Double-ramifica}. We will present the version from \cite[Theorem~7]{Bae2020Pixtons-formula}. 

For $r \in \N_{\geq 1}$, we define the term $P_r^\Gamma \in \CH(\Mbar_{\Gamma})$ as follows.
\[
P_r^{\Gamma} = r^{-h^1(\Gamma)}\sum_{w \in W_{A,r}^\Gamma} \prod_{e = \{h,h'\} \in E(\Gamma)} \frac{1-\exp\left(\frac{w(h)w(h')}{2} (\psi_h + \psi_{h'})\right)}{\psi_h + \psi_{h'}}.
\]
We remark that the factor
\[
\frac{1-\exp\left(\frac{w(h)w(h')}{2} (\psi_h + \psi_{h'})\right)}{\psi_h + \psi_{h'}}
\]
is well-defined, as the denominator formally divides the numerator.

For every choice of monomial in the factors $(\psi_h + \psi_h')$ for edges $\{h,h'\}$, the corresponding coefficient of $P_r^\Gamma$ is of the form $S_{A,r}^\Gamma(Q)$ for some polynomial $Q$ where $S_{A,r}^\Gamma(Q)$ is as defined in \ref{def:thesum}. Thus $P_r^\Gamma$ is eventually polynomial per \ref{prop:eventuallypolynomial}. We define
\[
P_0^\Gamma \in \CH(\Mbar_{\Gamma})
\]
to be the element obtained by taking the constant term of the polynomial expression.

Then by \cite[Theorem~7]{Bae2020Pixtons-formula} we have the formula
\begin{equation}
\label{eq:pixton}
\DR_g(A) = \left[\exp\left(-\frac12 (k^2 \kappa_1 - \sum_i a_i^2 \psi_i)\right) \sum_{\Gamma \in G_{g,n}} \frac{1}{|\Aut(\Gamma)|}\zeta_{\Gamma,*}P_0^\Gamma \right]^g,
\end{equation}
where $[\cdot]^g$ means the codimension $g$ term.


\section{Sums over weightings}
\label{sec:sums}
We fix $g,n$ as in the introduction, and we fix a graph $\Gamma \in G_{g,n}$. Write $H \setminus L = \{h_1, \dots, h_m\}$. Let $Q \in \Z[x_h : h \in H \setminus L]$ be a polynomial. 

Fix $A\in \Z^n$ with $\sum a_i = 0$. We recall the sum
\[
S_{A,r}^\Gamma(Q) = r^{-h_1(\Gamma)} \sum_{w \in W^{\Gamma}_{A,r} } Q(w(h))
\]
from \ref{eq:thesum}, and the constant term of the polynomial expression for the sum for large $r$
\[
S_{A,0}^\Gamma(Q).
\]

The main theorem in this section is the following polynomiality statement.

\begin{theorem}
\label{thm:sumspolynomials}
For every polynomial $Q \in \Z[x_h : h \in H \setminus L]$ the function \begin{align*}\{A \in \Z^n : \sum a_i = 0\} &\to \Q \\ A &\mapsto S_{A,0}^\Gamma(Q)\end{align*} is polynomial in $A$.
\end{theorem}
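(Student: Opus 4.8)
The plan is to reduce the polynomiality of $A \mapsto S^\Gamma_{A,0}(Q)$ to a lattice-point count over a parameter-dependent polytope and then apply an Ehrhart-type reciprocity/quasi-polynomiality argument, combined with the ``constant term'' bookkeeping already encoded in \ref{def:constantterm}. First I would fix a spanning tree $T \subseteq \Gamma$; the weightings mod $r$ are then governed by the $h^1(\Gamma)$ half-edges $h$ not in $T$ (one per chord edge), since conditions (2) and (3) determine the remaining $w(h)$ (mod $r$) affinely in terms of these free variables and the $a_i$. Concretely, choosing a free variable $c_e \in \{0,\dots,r-1\}$ for each chord $e$, every $w(h)$ becomes an affine-linear function $\ell_h(c; A)$ over $\Z$, and $S^\Gamma_{A,r}(Q) = r^{-h^1(\Gamma)} \sum_{c \in (\Z/r)^{h^1}} Q\big(\ell_h(c;A) \bmod r\big)$, where the reduction mod $r$ must be interpreted as choosing the representative in $\{0,\dots,r-1\}$.

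The key technical step is to understand how $Q(w(h) \bmod r)$ depends on $c$ and $A$ once we unfold the ``$\bmod\, r$''. Writing $w(h) \equiv \ell_h(c;A) \pmod r$ with representative in $[0,r)$, one has $w(h) = \ell_h(c;A) - r\lfloor \ell_h(c;A)/r\rfloor$, so the floor functions $\lfloor \ell_h(c;A)/r\rfloor$ subdivide the cube $\{0,\dots,r-1\}^{h^1}$ into regions on which $w(h)$ is a genuine affine function of $(c,A,r)$. On each such region, $Q(w(h))$ becomes an honest polynomial in $(c,A,r)$, and summing a polynomial over the lattice points of a (rational, $A$- and $r$-scaled) polytope is, by Ehrhart--Macdonald theory, a quasi-polynomial in $r$ with coefficients polynomial in $A$ — provided the polytope's facets move affine-linearly with $A$ and $r$, which they do here. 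Dividing by $r^{h^1(\Gamma)}$ and then extracting the constant term in $r$ (\ref{def:constantterm}, legitimate by \ref{prop:eventuallypolynomial}) kills the genuinely $r$-dependent pieces and leaves an expression polynomial in $A$. Some care is needed because the chambers of the floor-function subdivision themselves depend on $A$: as $A$ varies, lattice points cross walls, so the naive chamber-by-chamber polynomial is only \emph{piecewise} polynomial in $A$ a priori.

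The main obstacle, therefore, is upgrading this piecewise-polynomial dependence on $A$ to genuine polynomial dependence. I expect to handle this exactly as in analogous Ehrhart arguments: show that the piecewise-polynomial function $A \mapsto S^\Gamma_{A,0}(Q)$ is in fact a \emph{single} polynomial by checking it agrees, as a function of the residues of $A$ modulo suitable integers, on overlapping regions — i.e. the ``jumps'' across walls cancel after passing to the constant term in $r$. A clean way to package this is to prove the stronger statement that $r \mapsto S^\Gamma_{A,r}(Q)$ and $A \mapsto S^\Gamma_{A,r}(Q)$ fit together into a function that is quasi-polynomial in the pair $(A,r)$ jointly, with a period in $r$ only (no period in $A$), using that condition (1) pins $w(\ell_i) = a_i$ on the nose rather than mod $r$. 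An alternative, possibly cleaner route — and the one I would actually pursue first — is an inductive ``edge contraction / loop removal'' strategy: express $S^\Gamma_{A,r}(Q)$ in terms of sums over graphs with smaller $h^1$ by summing out one chord variable $c_e$, reducing to the case $h^1(\Gamma) = 0$ where there are no free variables and $W^\Gamma_{A,r}$ is either empty or a single point, making polynomiality in $A$ transparent; the inductive step then requires only that summing a polynomial in $c_e$ over $c_e \in \{0,\dots,r-1\}$, after resolving the floors, produces the needed structure, which is a one-variable version of the Ehrhart argument and hence much more tractable.
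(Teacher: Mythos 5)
You have correctly located the difficulty, but you have not resolved it, and the unresolved step is the entire content of the theorem. Your central claim --- that summing a polynomial over the lattice points of a polytope whose facets move affine-linearly in $(A,r)$ yields a quasi-polynomial in $r$ with coefficients \emph{polynomial} in $A$ --- is false as a general statement of Ehrhart--Macdonald theory: parametric lattice-point sums are generically only quasi-polynomial in \emph{all} parameters, with nontrivial periods in $A$ (already $\sum_{c=0}^{r-1}[\,c<a/2\,]=\lceil a/2\rceil$ is not polynomial in $a$). Your proposed repair --- that the wall-crossing jumps cancel after extracting the constant term in $r$, equivalently that the joint quasi-polynomial has period in $r$ only --- is precisely the theorem to be proven, and the one sentence of justification offered (that condition (1) pins $w(\ell_i)=a_i$ on the nose) does not bear on it, since the leg values do not even appear in $Q$ and the chamber structure in the chord variables still moves with $A$. ``I expect to handle this exactly as in analogous Ehrhart arguments'' is not an argument here; this polynomiality was open for years precisely because no standard Ehrhart statement delivers it.

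Your alternative route --- summing out one chord variable at a time and inducting --- is much closer to what the paper actually does, but the paper's induction is on the number of \emph{edges}, not on $h^1$, and its real work is in the two propositions you would still need to supply. Concretely: (a) a separating edge factors the sum as a product of sums on the two pieces (\ref{prop:separating}); (b) for a non-separating edge, shifting $A$ by $a(e_1-e_2)$ along that edge corresponds to a shift bijection on weightings, and unfolding the single resulting ``$\bmod\ r$'' produces a main term plus correction terms ($S_2$, $S_3$ in \ref{prop:nonseparating}) that are sums over the \emph{cut} graph $\Gamma_e$, which has fewer edges, so induction applies; the correction sum $\sum_{j=1}^{a}p_1(-j,a,0,0)\,q(A,j)$ is then polynomial in $a$ because $q(A,j)$ is polynomial by induction and a polynomial summed over $j=1,\dots,a$ is polynomial in $a$; and one must separately patch $a\geq 0$ with $a\leq 0$. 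Finally (c) a spanning-tree argument generates the whole lattice $\{\sum a_i=0\}$ from such one-edge shifts. Your sketch contains none of (a)--(c); in particular the inductive step you describe (``resolving the floors ... is a one-variable version of the Ehrhart argument'') silently reimports the same unproved cancellation. The base case (trees) is fine and works for the reason you suspect --- $Q((\ell(A))\bmod r)$ is eventually $Q(\ell(A)+\varepsilon r)$ with $\varepsilon\in\{0,1\}$, whose constant term in $r$ is $Q(\ell(A))$ --- but the gap is in the inductive step, and as written the proposal does not close it.
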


We will prove this by induction on the number of edges of $\Gamma$. In \Cref{subsec:separating} we treat the case where $\Gamma$ has a separating edge. In \Cref{subsec:non-separating} we prove a preliminary result in the case where $\Gamma$ has a non-separating edge. In \Cref{subsec:keqzero} we put everything together and finish the proof of \Cref{thm:sumspolynomials}. We will then prove \ref{cor:sumspolynomials}, generalising \ref{thm:sumspolynomials} to the domain $\{A \in \Z^n : (2g-2+n)\mid \sum a_i\}$.

We first make some general observations and definitions.

\begin{remark}
We will use the notation $v \bmod r$ to denote the unique element in $\{0,\dots,r-1\}$ that is modulo $r$ equivalent to $v$. We use the notation $v \equiv w \pmod r$ to denote that $n$ and $m$ are equivalent modulo $r$.
\end{remark}

\begin{lemma}
Fix $A$. Then for $r$ large enough and coprime to a fixed integer, we have
\[
S_{A,0}^\Gamma(Q) \equiv S_{A,r}^\Gamma(Q) \pmod r.
\]
\end{lemma}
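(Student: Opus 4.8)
The plan is to prove the congruence $S_{A,0}^\Gamma(Q) \equiv S_{A,r}^\Gamma(Q) \pmod r$ by directly unwinding \ref{prop:eventuallypolynomial}. By that proposition, for $r$ larger than some bound $N$ the quantity $S_{A,r}^\Gamma(Q)$ agrees with a fixed polynomial $P(r) \in \Q[r]$, and by \ref{def:constantterm} we have $S_{A,0}^\Gamma(Q) = P(0)$. Thus the statement reduces to a purely arithmetic fact about the polynomial $P$: for $r$ large and suitably coprime, $P(r) \equiv P(0) \pmod r$. First I would clear denominators: there is a fixed positive integer $D$ (depending on $\Gamma$ and $Q$ only) such that $DP \in \Z[r]$, and $D$ arises from the factor $r^{-h_1(\Gamma)}$ together with the standard denominators appearing in the polynomial-interpolation of sums of the shape $\sum Q(w(h))$ over $W^\Gamma_{A,r}$. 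For any $r$ coprime to $D$, multiplying the congruence $DP(r) \equiv DP(0) \pmod{r}$ (which is immediate since $DP \in \Z[r]$ and $r \mid DP(r) - DP(0)$) by the inverse of $D$ modulo $r$ yields $P(r) \equiv P(0) \pmod r$, which is exactly the claim once we also take $r > N$.

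The one subtlety is to make precise in what sense the congruence is meant: $S_{A,0}^\Gamma(Q)$ and $S_{A,r}^\Gamma(Q)$ are a priori rational numbers (because of the $r^{-h_1(\Gamma)}$ normalisation), so ``$\pmod r$'' must be read in the localisation $\Z_{(r)}$, i.e.\ both sides lie in $\Z[1/D] \subset \Z_{(r)}$ for $r$ coprime to $D$, and the difference lies in $r\Z_{(r)}$. With that convention the argument above is complete. The ``fixed integer'' to which $r$ must be coprime is this $D$; I would state the lemma with that integer named explicitly, or simply say ``coprime to some fixed integer depending only on $\Gamma$ and $Q$''.

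The only real work is identifying the cleared-denominator integer $D$, and this is already implicit in the proof of \ref{prop:eventuallypolynomial} in \cite[Appendix~A]{Janda2018Double-ramifica}: the eventual polynomial $P(r)$ is built from quasi-polynomial counts of lattice points in dilates of polytopes (the solution sets of the linear conditions defining $W^\Gamma_{A,r}$), whose leading behaviour is polynomial with denominators bounded by a product of the relevant torsion orders and factorials in $m = |H \setminus L|$ and $h^1(\Gamma)$. I do not expect to need the precise value of $D$ — only its existence and independence of $r$ — so I would not grind through that computation. The main (and essentially only) obstacle is bookkeeping: being careful that the bound $N$ beyond which $P(r) = S_{A,r}^\Gamma(Q)$ and the coprimality requirement with $D$ can be satisfied simultaneously for infinitely many $r$, which is obvious, and that all manipulations take place in $\Z_{(r)}$ rather than $\Z$.
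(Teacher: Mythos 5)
Your proposal is correct and is essentially the paper's own argument: the paper likewise observes that $S_{A,r}^\Gamma(Q)$ is eventually a polynomial in $r$ with rational coefficients and that, for $r$ coprime to the product of the denominators, $P(r)\equiv P(0)\pmod r$. Your additional care about working in $\Z_{(r)}$ and clearing denominators is exactly the implicit content of the paper's one-line proof.
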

\begin{proof}
We know $S_{A,r}^\Gamma(Q)$ is for $r$ large enough a polynomial in $r$, with rational coefficients. If the product of the denominators is coprime to $r$, this means that $S_{A,r}^\Gamma(Q) \equiv S_{A,0}^\Gamma(Q) \pmod r$.
\end{proof}

\begin{definition}
\label{def:graphcut}
Let $e = \{h_1,h_2\}$ be an edge of $\Gamma$. We let $\Gamma_e$ denote the graph $\Gamma$ with the edge $e$ removed, and with two legs $\ell_{n+1}, \ell_{n+2}$ added at the roots of the half-edges $r(h_1), r(h_2)$.
\end{definition}

\subsection{Separating edge case}
\label{subsec:separating}
In this section we will prove \Cref{thm:sumspolynomials} in the case that $\Gamma$ has a separating edge.

\begin{proposition}
\label{prop:separating}
Assume $\Gamma$ has a separating edge. Then \ref{thm:sumspolynomials} holds, assuming that it holds for graphs with fewer edges than $\Gamma$.
\end{proposition}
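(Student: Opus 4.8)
The plan is to exploit the product structure forced by a separating edge. Suppose $e = \{h_1, h_2\}$ is separating, so that $\Gamma_e$ (in the sense of \ref{def:graphcut}) is a disjoint union $\Gamma_1 \sqcup \Gamma_2$, where the legs of $\Gamma$ are partitioned into those on $\Gamma_1$ and those on $\Gamma_2$, with $\Gamma_1$ also carrying the new leg $\ell_{n+1}$ and $\Gamma_2$ carrying $\ell_{n+2}$. Since $e$ is separating, $h^1(\Gamma) = h^1(\Gamma_1) + h^1(\Gamma_2)$. A weighting $w \in W^\Gamma_{A,r}$ is the same datum as a choice of value $x = w(h_1) \in \{0,\dots,r-1\}$ (then $w(h_2) = -x \bmod r$ is determined) together with a weighting of $\Gamma_1$ for the augmented vector $(a_i)_{i \in \Gamma_1}$ extended by $x$ on $\ell_{n+1}$, and a weighting of $\Gamma_2$ for $(a_i)_{i \in \Gamma_2}$ extended by $-x$ on $\ell_{n+2}$; note the vertex condition at the two endpoints of $e$ is exactly the vertex condition in $\Gamma_1$ and $\Gamma_2$ using these new leg values, and the sum constraint on each side is automatically consistent since the total sum is $0$. (When $\ell_{n+1}$ and $\ell_{n+2}$ lie on the same vertex, i.e.\ $e$ is a self-loop — but a self-loop is never separating, so this does not occur.)

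Next I would split the polynomial $Q \in \Z[x_h : h \in H\setminus L]$. Write $Q = \sum_j Q_j^{(1)} \cdot Q_j^{(2)} \cdot x_{h_1}^{d_j}$ where $Q_j^{(1)}$ involves only half-edge variables of $\Gamma_1$ and $Q_j^{(2)}$ only those of $\Gamma_2$ (expand $Q$ monomial by monomial and collect the power of $x_{h_1}$; we may use $x_{h_2} = -x_{h_1}$ on the support of any weighting, or rather eliminate $x_{h_2}$ first). Then
\[
S^\Gamma_{A,r}(Q) = r^{-h^1(\Gamma)} \sum_{x=0}^{r-1} \sum_j \Bigl( \sum_{w_1} Q_j^{(1)}(w_1) \Bigr)\Bigl( \sum_{w_2} Q_j^{(2)}(w_2) \Bigr) (x \bmod r)^{d_j},
\]
where $w_1$ ranges over weightings of $\Gamma_1$ with leg value $x$ at $\ell_{n+1}$ and $w_2$ over weightings of $\Gamma_2$ with leg value $-x$ at $\ell_{n+2}$. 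The inner sums are, up to the powers of $r$ bookkeeping, $r^{h^1(\Gamma_1)} S^{\Gamma_1}_{A^{(1)}\ast x,\, r}(Q_j^{(1)})$ and likewise for $\Gamma_2$, where $A^{(1)}\ast x$ denotes the vector $(a_i)_{i\in\Gamma_1}$ with an extra entry $x$. So
\[
S^\Gamma_{A,r}(Q) = \frac1r \sum_{x=0}^{r-1} \sum_j S^{\Gamma_1}_{A^{(1)}\ast x,\,r}(Q_j^{(1)})\; S^{\Gamma_2}_{A^{(2)}\ast(-x),\,r}(Q_j^{(2)})\; (x\bmod r)^{d_j}.
\]

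Now I would take constant terms. By the inductive hypothesis applied to $\Gamma_1$ and $\Gamma_2$ (each has fewer edges than $\Gamma$), for fixed $x$ the quantity $S^{\Gamma_1}_{A^{(1)}\ast x, 0}(Q_j^{(1)})$ is polynomial in $(A^{(1)}, x)$ and similarly for $\Gamma_2$. More is needed though: I want the $r$-polynomial $S^{\Gamma_1}_{A^{(1)}\ast x, r}(Q_j^{(1)})$ as a whole to depend polynomially on $(A^{(1)}, x)$, uniformly. This should follow from the proof of \ref{prop:eventuallypolynomial} in \cite{Janda2016Double-ramifica} — the eventual polynomial in $r$ has coefficients that are themselves built out of sums-over-weightings of the same shape, hence by induction polynomial in the leg data; I will need to state this refinement as a lemma and prove it alongside the induction, or invoke the structure of the Ehrhart-type quasi-polynomial. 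Granting that, the expression $\frac1r \sum_{x=0}^{r-1} (\text{polynomial in } x \text{ and } r)$ is handled by Faulhaber: $\sum_{x=0}^{r-1} x^d$ is a polynomial in $r$ of degree $d+1$ with no constant term, so dividing by $r$ and extracting the constant term in $r$ yields a fixed rational number, and crucially the dependence on the remaining parameters $A^{(1)}, A^{(2)}$ stays polynomial. Assembling, $S^\Gamma_{A,0}(Q)$ becomes a finite sum of products of polynomials in the $a_i$, hence polynomial in $A$.

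The main obstacle I anticipate is precisely the uniformity in the inductive step: the naive inductive hypothesis only gives polynomiality of the \emph{constant term} $S^{\Gamma_i}_{\bullet, 0}$, but to carry out the Faulhaber summation over $x$ commuting with "take constant term in $r$" I need polynomial (in the leg data) control of the entire eventual $r$-polynomial $S^{\Gamma_i}_{\bullet, r}$, or at least enough of its coefficients. Resolving this — either by strengthening the induction to track the full $r$-polynomial, or by a careful interchange-of-limits argument using that $x$ ranges over a bounded-by-$r$ set — is where the real work lies; the combinatorial decomposition and the Faulhaber computation are routine once that is in place.
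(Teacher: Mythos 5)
Your decomposition misses the structural fact that makes the separating case easy: when $e$ is separating, the value of a weighting mod $r$ on $h_1$ is \emph{forced} by $A$. Summing the vertex conditions over the vertices of $\Gamma_1$ (here $k=0$) cancels all internal edge contributions and leaves $s_1 + w(h_1) \equiv 0 \pmod r$, where $s_1$ is the sum of the entries of $A$ lying on the $\Gamma_1$ side. Consequently, in your first display the inner sums vanish for every $x$ except $x = -s_1 \bmod r$; the function $x \mapsto S^{\Gamma_1}_{A^{(1)}\ast x,\,r}(Q_j^{(1)})$ is supported at a single residue and is in no sense a polynomial in $x$, so the Faulhaber step is not available. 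There is also a normalization slip: since $h^1(\Gamma) = h^1(\Gamma_1)+h^1(\Gamma_2)$ for a separating edge, there is no extra factor $\tfrac{1}{r}$ in front of the sum over $x$; the $\tfrac1r$ and the genuine range of $x$ over all residues are features of a \emph{non-separating} edge, where cutting drops $h^1$ by one. Your argument is essentially aimed at that other case.

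Using the forced value, the proof is short and the ``main obstacle'' you flag disappears. For a monomial $Q$ one has the exact identity
$S_{A,r}^\Gamma(Q) = S_{(A_1,-s_1),r}^{\Gamma_1}(Q_1)\cdot(-s_1 \bmod r)^{c_1}\,(-s_2\bmod r)^{c_2}\cdot S_{(A_2,-s_2),r}^{\Gamma_2}(Q_2)$,
with no sum over $x$ at all. To pass to constant terms you do not need uniform polynomial control of the full $r$-polynomial: for $r$ large and coprime to the denominators one has $S_{\bullet,r}\equiv S_{\bullet,0}\pmod r$ and $(-s_i\bmod r)\equiv -s_i \pmod r$, so $S_{A,0}^\Gamma(Q)$ is congruent modulo infinitely many $r$ to $S_{(A_1,-s_1),0}^{\Gamma_1}(Q_1)\,(-s_1)^{c_1}(-s_2)^{c_2}\,S_{(A_2,-s_2),0}^{\Gamma_2}(Q_2)$, which is polynomial in $A$ by the inductive hypothesis applied to $\Gamma_1$ and $\Gamma_2$; two quantities congruent modulo infinitely many integers are equal. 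The interchange-of-limits difficulty you anticipate is real, but it belongs to \ref{prop:nonseparating}, where it is resolved by exactly this mod-$r$ congruence device together with a bound $r > a$, not by tracking the full $r$-polynomial.
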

\begin{proof}
By linearity we can assume $Q$ is a monomial, and we write \[Q = \prod_{h \in H \setminus L} x_h^{c_h}.\]

Let $e = \{h_1,h_2\}$ be a separating edge of $\Gamma$.

Then $\Gamma_e$ is a disjoint union of two graphs, denoted $\Gamma_1 \sqcup \Gamma_2$. Let $A_1$ denote the subsequence of $A$ consisting of elements whose corresponding vertex lies in $\Gamma_1$, and similarly for $A_2$. We denote the sums $\sum_{a \in A_i} a$ by $s_i$ for $i \in \{1,2\}$. For $i \in \{1,2\}$ let $Q_i \in \Z[x_h \colon h \in H_i \setminus L_i]$ denote the monomial $Q$ with all half edges not in $H_i \setminus L_i$ set to $1$, and let $c_i$ denote $c_{h_i}$.

Note that then $W^{\Gamma}_{A,r}$ splits as $W^{\Gamma_1}_{(A_1,-s_1),r} \times W^{\Gamma_2}_{(A_2,-s_2),r}$. Denote these summands by $W_1$ and $W_2$ respectively. Then we find the formula
\begin{align*}
S_{A,r}^\Gamma(Q) &= r^{-h_1(\Gamma)} \sum_{w_1 \in W_1} \sum_{w_2 \in W_2} Q_1(w_1) (-s_1 \bmod r)^{c_1} (-s_2 \bmod r)^{c_2} Q_2(w_2)\\
               &= \left(r^{-h_1(\Gamma_1)} \sum_{w_1 \in W_1} Q_1(w_1)\right) \cdot  (-s_1 \bmod r)^{c_1} \cdot \\ & {\hspace{173pt}} (-s_2 \bmod r)^{c_2} \cdot \left(r^{-h_1(\Gamma_2)} \sum_{w_2 \in W_2} Q_2(w_2)\right)\\ 
               &=S_{(A_1,-s_1),r}^{\Gamma_1}(Q_1) \cdot (-s_1 \bmod r)^{c_1} (-s_2 \bmod r)^{c_2} \cdot S_{(A_2,-s_2),r}^{\Gamma_2}(Q_2)
\end{align*}
By our assumption that \ref{thm:sumspolynomials} holds for graphs with fewer edges, we know the first and last factors are modulo $r$ equal to $S_{(A_i,-s_i),0}^{\Gamma_1}(Q_i)$ with $i=1$ and $2$ respectively (for $r$ large enough and coprime to a fixed integer). In total we see that for such $r$ we have the following equalities
\begin{align*}
S_{A,0}^\Gamma(Q) &\equiv S_{A,r}^\Gamma(Q) &&\hspace{-8pt}\pmod r \\
               &= S_{(A_1,-s_1),r}^{\Gamma_1}(Q_1) \cdot (-s_1 \bmod r)^{c_1} (-s_2 \bmod r)^{c_2} \cdot S_{(A_2,-s_2),r}^{\Gamma_2}(Q_2)&\\
               &\equiv S_{(A_1,-s_1),0}^{\Gamma_1}(Q_1) \cdot (-s_1)^{c_1}(-s_2)^{c_2} \cdot S_{(A_2,-s_2),0}^{\Gamma_2}(Q_2) &&\hspace{-8pt}\pmod r.\\
\end{align*}
By our assumption that \ref{thm:sumspolynomials} holds for graphs with fewer edges than $\Gamma$ the term \[S_{(A_1,-s_1),0}^{\Gamma_1}(Q_1) \cdot (-s_1)^{c_1}(-s_2)^{c_2} \cdot S_{(A_2,-s_2),0}^{\Gamma_2}(Q_2)\] is polynomial in $A$. As it agrees with $S_{A,0}^{\Gamma}$ modulo an infinite number of integers, we have
\begin{equation}
S_{A,0}^{\Gamma}(Q) = S_{(A_1,-s_1),0}^{\Gamma_1}(Q_1) \cdot (s_1 s_2)^{c_1} \cdot S_{(A_2,-s_2),0}^{\Gamma_2}(Q_2)
\end{equation}
and in particular, $S_{A,0}^{\Gamma}(Q)$ is polynomial in $A$.
\end{proof}

\subsection{Non-separating case}
\label{subsec:non-separating}
In this section we will treat the case where $\Gamma$ has a non-separating edge.

\begin{proposition}
\label{prop:nonseparating}
Let $e = \{h_1,h_2\}$ be a non-separating edge, and denote $v_i = r(h_i)$ the root of $h_i$ for $i \in \{1,2\}$. Assume for $i \in \{1,2\}$ that there is a leg $\ell_i$ with insertions $a_i$ and adjacent to $v_i$. Let $A \in \Z^n$ be a vector with total sum zero. For $a \in \Z$, let $A_a$ be the vector $(a_1-a,a_2+a,a_3,a_4,\dots,a_n)$. Assume that \ref{thm:sumspolynomials} holds for graphs with fewer edges than $\Gamma$.

Then there exists a polynomial $\Psi_{\Gamma,e,Q}(x_1,\dots,x_n,y,z_1,\dots,z_m)$ and polynomials $R_1,\dots,R_m \in \Z[x_h : h \in H \setminus L]$ such that
\[
S_{A_a,0}^\Gamma(Q) = \Psi_{\Gamma,e,Q}(a_1,\dots,a_n,a,S_{A,0}^\Gamma(R_1),\dots,S_{A,0}^\Gamma(R_m)).
\]
\end{proposition}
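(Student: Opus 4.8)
The plan is to analyze how the weighting sum $S_{A_a,0}^\Gamma(Q)$ changes as we vary $a$, by comparing the defining modular sum for $A_a$ with that for $A$ at a common modulus $r$. Since $e=\{h_1,h_2\}$ is a non-separating edge, the graph $\Gamma_e$ (with the edge cut and two new legs added at $v_1,v_2$) has one fewer edge and one fewer loop, i.e.\ $h^1(\Gamma_e)=h^1(\Gamma)-1$, and it is still connected. The key observation is that a weighting $w\in W^\Gamma_{A_a,r}$ is the same data as a weighting on $\Gamma_e$ with the two new leg-values $w(h_1)$ and $w(h_2)$ constrained to be negatives of each other mod $r$ — but now viewed as a free parameter $t = w(h_1)\bmod r$ ranging over $\{0,\dots,r-1\}$. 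So I would first rewrite
\[
S^\Gamma_{A_a,r}(Q) \;=\; r^{-h^1(\Gamma)}\sum_{t=0}^{r-1}\;\sum_{w\in W^{\Gamma_e}_{(A_a,\,t,\,-t),r}} Q(w),
\]
where in $Q$ the variables $x_{h_1},x_{h_2}$ are specialized to $t$ and $-t\bmod r$. The inner sum is, for each fixed $t$, of the form $r\cdot r^{-h^1(\Gamma_e)}\sum_{w} Q_t(w)$, i.e.\ a sum over weightings of the smaller graph $\Gamma_e$ with modified insertion vector; by the inductive hypothesis applied to $\Gamma_e$, its constant term in $r$ is a polynomial in the insertions, including in $t$ and in the original $a_i$'s.

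The technical heart is therefore to understand the outer sum $\sum_{t=0}^{r-1}(\cdots)$, which has the shape $\sum_{t=0}^{r-1} P(t,r,a_i)$ where $P$ is (eventually) a polynomial in $t$ whose coefficients are themselves eventually-polynomial in $r$ and genuinely polynomial in the $a_i$. Using Faulhaber's formula, $\sum_{t=0}^{r-1} t^d$ is a polynomial in $r$ of degree $d+1$ with no constant term; more importantly, the dependence on the insertions $a_i$ and on $a$ is carried along as polynomial coefficients and is unaffected by the summation over $t$. Extracting the constant term in $r$ from $\sum_{t=0}^{r-1}P(t,r,a_i)$ thus produces something that is manifestly a polynomial in the $a_i$ and in $a$. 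The subtlety to handle carefully is the interplay between ``$t\bmod r$'' versus ``$-t\bmod r$'': one should split the range of $t$ and keep track of the $\bmod\, r$ reductions — but since we sum over a full period $t=0,\dots,r-1$, the pair $(t\bmod r,(-t)\bmod r)$ ranges over all pairs summing to $0$ or $r$, and the resulting piecewise-polynomiality in $t$ collapses after summation. This is where I expect to essentially reconstruct the content of \Cref{prop:eventuallypolynomial}/Faulhaber, applied in a relative setting.

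Concretely, here is the order of steps. (i) Fix $A$ and $r$ large and coprime to a fixed integer, so that $S^\Gamma_{\bullet,r}\equiv S^\Gamma_{\bullet,0}\pmod r$ and likewise for $\Gamma_e$. (ii) Re-express $S^\Gamma_{A_a,r}(Q)$ as the double sum above over $t$ and over $W^{\Gamma_e}_{(A_a,t,-t),r}$, absorbing one factor of $r$ into the change $h^1(\Gamma)\to h^1(\Gamma_e)$. (iii) By the inductive hypothesis, write the inner sum's constant term as an explicit polynomial $F(a_1,\dots,a_n,a,t)$; more precisely, for $r$ large the inner sum equals $F$ plus a multiple of $r$, and $F$ is a $\Z$-linear combination of terms $S^{\Gamma_e}_{(A,\dots),0}(R_j')$ with the $a_i$, $a$, $t$ appearing polynomially, so that by induction these are themselves polynomials. (iv) Sum over $t=0,\dots,r-1$ via Faulhaber, take the constant term in $r$, and observe that the outcome is a polynomial in $a_1,\dots,a_n,a$ together with finitely many quantities of the form $S^\Gamma_{A,0}(R_i)$ — the latter arise precisely because the $t$-independent part of $F$, re-summed and specialized, reproduces weighting sums on $\Gamma$ itself with insertion vector $A$; these are the $R_1,\dots,R_m$ in the statement. (v) Assemble $\Psi_{\Gamma,e,Q}$ from these pieces and conclude.

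The main obstacle, I expect, is step (iii)–(iv): keeping honest bookkeeping of ``eventually polynomial in $r$'' versus ``polynomial in the insertions,'' and in particular making precise the claim that the $t$-dependent coefficients produced by the inductive hypothesis are uniform polynomials (valid for all $t\in\{0,\dots,r-1\}$ once $r$ is large) rather than merely eventually-polynomial-in-$t$. Resolving this requires noting that applying \Cref{thm:sumspolynomials} to $\Gamma_e$ gives a \emph{single} polynomial in all of its insertions simultaneously, so plugging in $t$ for one insertion is legitimate; and that the finitely many ``correction'' terms from small $t$ or from the $\bmod r$ wrap-around contribute only to higher-order-in-$r$ terms and hence drop out of the constant term. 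Once that is pinned down, the extraction of the constant term and the identification of the $S^\Gamma_{A,0}(R_i)$ pieces is a routine, if slightly lengthy, computation with Faulhaber's formula.
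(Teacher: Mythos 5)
Your decomposition is genuinely different from the paper's, and as written it has a gap at exactly the point you flag in your final paragraph — and the resolution you sketch there does not work. After cutting the edge you must evaluate
\[
S^\Gamma_{A_a,r}(Q)\;=\;\frac{1}{r}\sum_{t=0}^{r-1} t^{c_1}\bigl((-t)\bmod r\bigr)^{c_2}\, S^{\Gamma_e}_{(A_a,t,-t),r}(Q_0),
\]
and you propose to replace the inner quantity by the polynomial in the insertions supplied by \ref{thm:sumspolynomials} for $\Gamma_e$. But that theorem only describes the \emph{constant term} $S^{\Gamma_e}_{B,0}$, and the identification $S^{\Gamma_e}_{B,r}\approx S^{\Gamma_e}_{B,0}$ (or $\approx$ the full eventual polynomial in $r$) holds only for $r$ larger than a threshold depending on $B$. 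In your sum $t$ ranges up to $r-1$, so for a positive proportion of the terms $t$ is comparable to $r$ and no uniform threshold is available; these are not ``finitely many correction terms that drop out of the constant term.'' Worse, even where the replacement is valid you would need all the coefficients of the inner $r$-polynomial as functions of $t$, not just its constant term, since after multiplying by $\frac1r$ and summing $r$ terms the linear-in-$r$ part of the inner sum feeds into the constant term of the total. Producing such a uniform two-variable (quasi-)polynomial description of $S^{\Gamma_e}_{(A,t,-t),r}$ in $(t,r)$ is essentially the hard content of \ref{prop:eventuallypolynomial} with explicit dependence on insertions — i.e., the very thing being proved — so the argument is circular as it stands. (A further symptom: if your steps went through they would show directly that $S^\Gamma_{B,0}(Q)$ is polynomial in all of $B$, making \ref{prop:nonseparating} and the spanning-tree induction in the proof of \ref{thm:sumspolynomials} unnecessary.)

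The paper's proof is structured precisely to avoid this. Instead of cutting the edge and summing over an $r$-dependent range, it uses the hypothesis that legs $\ell_1,\ell_2$ sit at $v_1,v_2$ to build a shift bijection $\phi_a\colon W^\Gamma_{A,r}\to W^\Gamma_{A_a,r}$ (add $a$ to $w(h_1)$, subtract $a$ from $w(h_2)$, mod $r$). This rewrites $S^\Gamma_{A_a,r}(Q)$ as a sum over $W^\Gamma_{A,r}$ of $Q$ evaluated at shifted arguments; expanding $(w(h_1)+a)\bmod r = w(h_1)+a-r\floor{\frac{w(h_1)+a}{r}}$ splits it into a main term $S_1$, which stays on $\Gamma$ with insertion $A$ and yields the $S^\Gamma_{A,0}(R_i)$, plus wrap-around corrections $S_2, S_3$ in which the sum over the cut graph $\Gamma_e$ has length only $|a|$, independent of $r$, so the induction hypothesis applies term by term. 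If you want to keep your route, you would have to prove a uniform-in-$t$ polynomiality statement for $S^{\Gamma_e}_{(A,t,-t),r}$ over $0\le t<r$; otherwise I recommend switching to the shift-bijection decomposition.
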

\begin{proof}
By linearity we can assume $Q$ is a monomial, and we write \[Q = \prod_{h \in H \setminus L} x_h^{c_h}.\] For $i \in \{1,2\}$ we let $c_i$ denote $c_{h_i}$.

We first prove the case where $a \geq 0$. We fix an $r$, and let $W_a$ denote $W_{A_a,r}^{\Gamma}$. We will first rewrite $S_{A_a,r}^{\Gamma}(Q)$. Note there is a bijection
\begin{align*}
\phi_a \colon W_0 &\to W_a\\
              w & \mapsto \left(h_i \mapsto\begin{cases}(w(h_1) + a)\bmod r & \text{if $i$ = 1} \\ (w(h_2) - a) \bmod r & \text{if $i$ = 2}\\ w(h_i) & \text{if } i > 2\end{cases}\right).
\end{align*}

Let $Q_0(w)$ denote the monomial $Q$ with $x_{h_1}$ and $x_{h_2}$ substituted by $1$. We then have the formula
\begin{align*}
S_{A_a,r}^\Gamma(Q) &= r^{-h_1(\Gamma)} \sum_{w \in W_0} Q(\phi_a(w)) \\
                 &= r^{-h_1(\Gamma)} \sum_{w \in W_0} ((w(h_1) + a) \bmod r)^{c_1} ((w(h_2) - a) \bmod r)^{c_2} Q_0(w).
\end{align*}
Now we will rewrite the factor $((w(h_1) + a) \bmod r)^{c_1} ((w(h_2) - a) \bmod r)^{c_2}$, using some facts about $m \bmod r$ for integers $m$.

For any $m \not\equiv 0 \pmod r$ we have $(r-m) \bmod r = r - (m \bmod r)$ and hence \[(m \bmod r)^{c_1} ((r-m) \bmod r)^{c_2} = (m \bmod r)^{c_1} (r - (m \bmod r))^{c_2}.\] For $m \equiv 0 \pmod r$ we have \[(m \bmod r)^{c_1} ((r-m) \bmod r)^{c_2} = (m \bmod r)^{c_1} (r - (m \bmod r))^{c_2} - (1-\delta_{c_2,0}) 0^{c_1} r^{c_2}\] where $\delta$ is the Kronecker delta. (This term $(1-\delta_{c_2,0})0^{c_1} r^{c_2}$ is usually $0$, but can be non-zero in the case $c_1 = 0 < c_2$).
Also, $m \bmod r = m - r \floor{\frac{m}{r}}$ is a polynomial in $m$ and $r \floor{\frac{m}{r}}$.

Now we will use this for $m = w(h_1)+a$. All in all, the factor $((w(h_1) + a) \bmod r)^{c_1} ((w(h_2) - a) \bmod r)^{c_2}$ can be rewritten as \[p\left(w(h_1),a,r,r\floor{\frac{w(h_1)+a}{r}}\right) - \delta_{w(h_1)+a \bmod r,0}\cdot(1-\delta_{c_2,0})\cdot 0^{c_1} r^{c_2}\] for the polynomial
\[
p(x_0,x_1,x_2,x_3) = \left(x_0+x_1 - x_3\right)^{c_1} \cdot \left(x_2- x_0 - x_1 + x_3\right)^{c_2}.
\]
Now we assume that $r > a$. Then we have $0 \leq w(h_1) + a < 2r$ and so

\begin{equation}
\label{eq:floor}
\floor{\frac{w(h_1)+a}{r}} = \begin{cases}0 &\text{if } r- w(h_1) > a \\ 1 &\text{if } r-w(h_1) \leq a\end{cases}.
\end{equation}
Define $p_0(x_0,x_1,x_2) = p(x_0,x_1,x_2,0)$ and $p_1(x_0,x_1,x_2,x_3) = \frac{p-p_0}{x_3}$. Then we can write \[p = p_0(x_0,x_1,x_2) + x_3 p_1(x_0,x_1,x_2,x_3).\]

Write
\[
S_1 \coloneqq r^{-h_1(\Gamma)} \sum_{w \in W_0} p_0(w(h_1),a,r) Q_0(w)
\]
and
\[
S_2 \coloneqq r^{-h_1(\Gamma_e)} \sum_{w \in W_0} \floor{\frac{w(h_1)+a}{r}}p_1(w(h_1),a,r,r\floor{\frac{w(h_1)+a}{r}}) Q_0(w)
\]
and
\[
S_3 \coloneqq r^{-h_1(\Gamma)} \sum_{w \in W_0 \colon w(h_1) = -a \pmod r} (1-\delta_{c_2,0})\cdot 0^{c_1} r^{c_2} Q_0(w).
\]
so that $S_{A_a,r}^\Gamma(Q) = S_1 + S_2 - S_3$. Here $\Gamma_e$ is the graph defined in \ref{def:graphcut} we have used that $r^{-h_1(\Gamma)}\cdot r = r^{-h_1(\Gamma_e)}$. We will show that each of $S_1,S_2,S_3$ is modulo $r$ of the required form.

Write $p_0(x_0,x_1,x_2) = \sum_{i=0}^d p_{0,i}(x_0) x_1^i + x_2 p_{2}(x_0,x_1,x_2)$ for some polynomials $p_{0,i},p_2$. Then we get $S_1 \equiv \sum_{i=0}^d a^i S_{A,r}^\Gamma(p_{0,i}(w(h_1)) \cdot Q_0) \pmod r$. Hence for $r$ large enough and coprime to a finite set of integers $S_1$ is modulo $r$ equivalent to a polynomial in $a$ and terms $S_{A,0}^{\Gamma}(R)$ for some polynomials $R$, and in particular modulo $r$ it is of the required form.

By \ref{eq:floor} we can rewrite the sum $S_2$ as
\begin{align*}
S_2 &= r^{-h_1(\Gamma_e)} \sum_{w \in W_0 \colon 1 \leq r-w(h_1) \leq a} p_1(w(h_1),a,r,r) Q_0(w)\\
    &= \sum_{j = 1}^{a} \left( p_1(r-j,a,r,r) r^{-h_1(\Gamma_e)} \sum_{w \in W_{(A,r-j,j-r),r}^{\Gamma_e}} Q_0(w) \right)\\
    &= \sum_{j = 1}^{a} p_1(r-j,a,r,r) S_{(A,-j,j),r}^{\Gamma_e}(Q_0)
\end{align*}
Taking $r$ large enough with respect to $a$ and coprime to a finite set of integers, for every $j$ the factor $S_{(A,-j,j),r}^{\Gamma_e}(Q_0)$ is modulo $r$ equivalent to $S_{(A,-j,j),0}^{\Gamma_e}(Q_0)$. By our induction hypothesis applied to the graph $\Gamma_e$, the term $q(A,j) \coloneqq S_{(A,-j,j),0}^{\Gamma_e}(Q_0)$ is a polynomial in $A,j$.
Then $S_2 \equiv \sum_{j=1}^a p_1(-j,a,0,0) q(A,j) \pmod r$, and $\sum_{j=1}^a p_1(-j,a,0,0) q(A,j)$ is a polynomial in $A$ and $a$ for $a \geq 0$.

Finally, $S_3$ is $0$ unless $c_1 = 0 < c_2$, so assume $c_1 = 0 < c_2$. Then
\[
S_3 = r^{c_2-1} S_{(A,-a,a),r}^{\Gamma_e}(Q_0).
\]
Modulo $r$ this is either $0$, or $S_{(A,-a,a),0}^{\Gamma_e}(Q_0)$ which by the induction hypothesis is polynomial in $A,a$.

In total $S_{A_a,r}^\Gamma(Q)$ is modulo $r$ equivalent to a fixed polynomial $\Psi_{\Gamma,e,Q}$ in $A,a$ and in terms $S_{A,0}^\Gamma(R_i)$ for polynomials $R_1,\dots,R_m$, for every $r$ large enough and coprime to a fixed integer. This means that for $a \geq 0$ we find \[S_{A_a,0}^\Gamma(Q) = \Psi_{\Gamma,e,Q}(a_1,\dots,a_n,a,S_{A,0}^\Gamma(R_1),\dots,S_{A,0}^\Gamma(R_m)).\]

This finishes the proof for $a \geq 0$.

Now we apply this to the edge with the opposite orientation and to $A_a$ with $a$ positive. We find that for $a,b \geq 0$ we have that $S_{A_{a-b},0}^{\Gamma}(Q)$ is a polynomial in $A,a,b$ and terms $S_{A_a,0}^\Gamma(R)$ for polynomials $R$. These latter are polynomials in $A,a,S_{A,0}^\Gamma(R')$ for polynomials $R'$, so in total
\[
S_{A_{a-b},0}^{\Gamma}(Q)
\]
is for $a,b \geq 0$ a polynomial in $A,a,b$ and terms $S_{A,0}^\Gamma(R)$. As it is also a function of $A,a-b$ and terms $S_{A,0}^\Gamma(R)$, it is a polynomial in $A,a-b$ and terms $S_{A,0}^\Gamma(R)$ and we find that
\[S_{A_a,0}^\Gamma(Q) = \Psi_{\Gamma,e,Q}(a_1,\dots,a_n,a,S_{A,0}^\Gamma(R_1),\dots,S_{A,0}^\Gamma(R_m))\]
holds for any $a \in \Z$.
\end{proof}

\subsection{\texorpdfstring{Proof of \Cref{thm:sumspolynomials}}{Proof of Theorem 4.1}}
\label{subsec:keqzero}
Now we have assembled the ingredients to prove \ref{thm:sumspolynomials}.

\begin{proof}[Proof of \Cref{thm:sumspolynomials}]
We will prove this by induction on the number of edges of $\Gamma$. For $\Gamma$ a graph without edges this is immediately clear. Now we assume \ref{thm:sumspolynomials} holds for graphs with fewer edges than $\Gamma$.

If $\Gamma$ has a separating edge, it follows immediately from \Cref{prop:separating}. We now assume there are no separating edges.

We can assume there is exactly one leg at every vertex. We number the vertices $1$ through $n$, in accordance with the attached leg. Let $T$ be a spanning tree of $\Gamma$. Without loss of generality, our numbering is such that for every $k = 1,\dots,n$ the vertices $\{1,\dots,k\}$ form a subtree $T_k$ of $T$. We denote \[P_k \coloneqq \{A \in \Z^n \colon \sum_{i=1}^k a_i = 0 \wedge a_{k+1} = \dots = a_n = 0 \}.\] 

Then we will prove with induction on $k$ that for every polynomial $Q$ the map $A \mapsto S_{A,0}^{\Gamma}(Q)$ is polynomial when restricted to $P_k$.

We start with a subtree with one vertex. By the total sum zero condition we have $P_1 = \{0\}$, and then for every polynomial $Q$ the map $P_k \to \Z, A \mapsto S_{A,0}^{\Gamma}(Q)$ is a constant map, and in particular polynomial.

Now assume we have proven it for $k$ and want to prove it for $k+1$. Let $j \leq k$ be the vertex that in $T$ is adjacent to vertex $k+1$. Note that $P_{k+1} = P_k + \{a (e_k - e_j) \colon a \in \Z\}$ where $e_i$ is the $i$'th basis vector in $\Z^n$. The induction step follows from \Cref{prop:nonseparating}. As $P_n = \{A \in \Z^n \colon \sum_i a_i = 0\}$ we are done.
\end{proof}

We immediately find the following corollary.
\begin{corollary}
\label{cor:sumspolynomials}
The function \begin{align*}\{A \in \Z^n : (2g-2+n)\mid\sum a_i\} &\to \Q \\ A &\mapsto S_{A,0}^\Gamma(Q)\end{align*} is polynomial in $A$.
\end{corollary}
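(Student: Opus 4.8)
The plan is to reduce \ref{cor:sumspolynomials} directly to \ref{thm:sumspolynomials} by trading the parameter $k$ for extra legs. Write $N \coloneqq 2g - 2 + n$, and for $A$ in the domain $\{(2g - 2 + n) \mid \sum a_i\}$ set $k \coloneqq \frac{1}{N}\sum_i a_i \in \Z$; this $k$ is a polynomial (indeed linear, with rational coefficients) function of $A$. Let $\tilde\Gamma$ be obtained from $\Gamma$ by attaching one new leg $\ell_v'$ at every vertex $v \in V(\Gamma)$; since we only add legs, $\tilde\Gamma \in G_{g, \tilde n}$ with $\tilde n \coloneqq n + |V(\Gamma)|$, it has the same edge set as $\Gamma$ (so $h^1(\tilde\Gamma) = h^1(\Gamma)$), and stability is preserved as each $2g(v) - 2 + n(v) > 0$ only increases. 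Let $\tilde A \in \Z^{\tilde n}$ agree with $A$ on the old legs and equal $-k(2g(v) - 2 + n(v))$ on $\ell_v'$. Using the identity $\sum_{v \in V(\Gamma)}(2g(v) - 2 + n(v)) = 2g - 2 + n$ (a direct count), we get $\sum_i \tilde a_i = kN - kN = 0$, so $\tilde A$ lies in the domain of \ref{thm:sumspolynomials} for $\tilde\Gamma$.

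Next I would check that for every $r \geq 1$, restriction of weightings along $H(\Gamma) \subset H(\tilde\Gamma)$ is a bijection $W^{\Gamma}_{A,r} \isom W^{\tilde\Gamma}_{\tilde A, r}$: the conditions on half-edges of $\Gamma$ are the same on both sides, a weighting on $\tilde\Gamma$ is forced to take the value $-k(2g(v) - 2 + n(v)) \bmod r$ on $\ell_v'$, and the vertex condition at $v$ for $\tilde\Gamma$ (which has parameter $0$), namely $\sum_{r(h) = v}\tilde w(h) \equiv 0 \pmod r$, becomes — upon moving $\tilde w(\ell_v')$ to the other side — precisely the vertex condition $\sum_{r(h) = v} w(h) \equiv k(2g(v) - 2 + n(v)) \pmod r$ for $\Gamma$. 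Since $H(\tilde\Gamma) \setminus L(\tilde\Gamma) = H(\Gamma) \setminus L(\Gamma)$ and $Q$ only uses these half-edges, the bijection is compatible with $Q$, and as $h^1$ is unchanged this gives the equality of functions of $r$
\[
S^{\Gamma}_{A,r}(Q) = S^{\tilde\Gamma}_{\tilde A, r}(Q).
\]
Taking constant terms of the large-$r$ polynomials yields $S^{\Gamma}_{A,0}(Q) = S^{\tilde\Gamma}_{\tilde A, 0}(Q)$.

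Finally, \ref{thm:sumspolynomials} applied to $\tilde\Gamma$ gives a polynomial $P$ with $S^{\tilde\Gamma}_{\tilde A, 0}(Q) = P(\tilde A)$ whenever $\sum_i \tilde a_i = 0$. Since $A \mapsto \tilde A$ is itself a polynomial (linear) map — its $\ell_v'$-component being $-\frac{2g(v) - 2 + n(v)}{N}\sum_i a_i$ — the composite $A \mapsto S^{\Gamma}_{A,0}(Q) = P(\tilde A)$ is a polynomial in $A$ on the whole domain $\{(2g - 2 + n) \mid \sum a_i\}$. I do not expect a genuine obstacle here: the real content has been packaged into \ref{thm:sumspolynomials}, and what remains is the elementary identity $\sum_v(2g(v) - 2 + n(v)) = 2g - 2 + n$ together with the routine verification that restriction of weightings is the claimed bijection.
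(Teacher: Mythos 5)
Your proof is correct and takes essentially the same route as the paper: adding one leg per vertex with insertion $-k(2g(v)-2+n(v))$ to reduce to the sum-zero case of \ref{thm:sumspolynomials}. Your write-up just spells out the details (the bijection of weightings and the linearity of $A \mapsto \tilde A$) that the paper leaves implicit.
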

\begin{proof}
We will prove this  by reducing to the $k = 0$ case.

Let $\Gamma' \in G_{g,n+\#V}$ be the graph $\Gamma$ with one leg added at every vertex. Let $A'$ denote the vector $(a_1,\dots,a_n,(-k(2g(v)-2+n(v)))_{v \in v})$. We see that the sum of the elements in $A'$ is $0$. Note that $S_{A,r}^\Gamma(Q) = S_{A',r}^{\Gamma'}(Q)$. The theorem follows by applying \ref{thm:sumspolynomials} to $S_{A',r}^{\Gamma'}(Q)$.
\end{proof}

\section{Polynomiality of the double ramification cycle}
\label{sec:drpoly}
In this section we will prove polynomiality of the double ramification cycle. With \Cref{thm:sumspolynomials} and Pixton's formula \ref{eq:pixton}, we conclude with the following theorem.

\begin{theorem}
\label{thm:drpoly}
Fix $g,n$. The cycle $\DR_g(a_1,\dots,a_n) \in \CH^g(M_{g,n})$ is a polynomial in $(a_1,\dots,a_n) \in \Z^n$, where we require that $\sum a_i$ is divisible by $(2g-2+n)$.
\end{theorem}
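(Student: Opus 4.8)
The plan is to combine Pixton's formula \ref{eq:pixton} with the polynomiality statement \ref{cor:sumspolynomials}, observing that the only dependence on $A$ in the formula sits inside finitely many expressions each of which is of the form handled by \ref{cor:sumspolynomials}, together with the explicit exponential prefactor which is manifestly polynomial in $A$.

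First I would unpack \ref{eq:pixton}: the class $\DR_g(A)$ is the codimension-$g$ part of
\[
\exp\!\left(-\tfrac12\bigl(k^2\kappa_1 - \textstyle\sum_i a_i^2\psi_i\bigr)\right)\sum_{\Gamma\in G_{g,n}}\frac{1}{|\Aut(\Gamma)|}\zeta_{\Gamma,*}P_0^\Gamma,
\]
where $k$ is determined by $A$ via $k(2g-2+n)=\sum a_i$; since we require $(2g-2+n)\mid\sum a_i$, the integer $k$ is an (affine-)linear function of $A$, so the prefactor is a polynomial in $A$ with values in $\CH^*(\Mbar_{g,n})$ (only finitely many terms of the exponential contribute in a fixed codimension). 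The sum over $\Gamma\in G_{g,n}$ is finite because $g,n$ are fixed, so it suffices to prove that each $\zeta_{\Gamma,*}P_0^\Gamma$ is polynomial in $A$; and since $\zeta_{\Gamma,*}$ is a fixed linear map independent of $A$, it suffices to prove $P_0^\Gamma\in\CH(\Mbar_\Gamma)$ is polynomial in $A$ for each fixed $\Gamma$.

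Next I would fix a graph $\Gamma$ and expand
\[
P_r^\Gamma = r^{-h^1(\Gamma)}\sum_{w\in W_{A,r}^\Gamma}\prod_{e=\{h,h'\}\in E(\Gamma)}\frac{1-\exp\!\left(\tfrac{w(h)w(h')}{2}(\psi_h+\psi_{h'})\right)}{\psi_h+\psi_{h'}}
\]
as a (finite, in any bounded codimension) sum of monomials in the classes $\psi_h$. As already noted in \ref{sec:pixton}, the coefficient of each such monomial is exactly $S_{A,r}^\Gamma(Q)$ for an explicit polynomial $Q\in\Z[x_h:h\in H\setminus L]$ (obtained by reading off the relevant coefficient of $\prod_e\bigl(1-\exp(\tfrac{x_hx_{h'}}{2}(\psi_h+\psi_{h'}))\bigr)/(\psi_h+\psi_{h'})$), and taking the constant term of the eventual polynomial in $r$ gives that this coefficient of $P_0^\Gamma$ equals $S_{A,0}^\Gamma(Q)$. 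By \ref{cor:sumspolynomials}, each $S_{A,0}^\Gamma(Q)$ is polynomial in $A$ on $\{A\in\Z^n:(2g-2+n)\mid\sum a_i\}$. Hence each coefficient of $P_0^\Gamma$ (in the relevant truncation) is polynomial in $A$, so $P_0^\Gamma$ is, and therefore so is $\DR_g(A)$.

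The genuine mathematical content has already been extracted into \ref{thm:sumspolynomials} and its corollary, so this last step is essentially bookkeeping; the only mild subtlety I would be careful about is the role of $k$. One must check that when $A$ ranges over the lattice $\{(2g-2+n)\mid\sum a_i\}$ the value of $k$ varies linearly, so that both the weighting conditions (condition (3) in the definition of a weighting, which involves $k$) and the prefactor remain polynomial; this is exactly why \ref{cor:sumspolynomials} was stated for this domain rather than only for $\sum a_i=0$. I would also note that polynomiality in each fixed codimension is all that is needed since $\CH^g(\Mbar_{g,n})$ is finitely generated (or simply because we only ever look at the codimension-$g$ truncation), so no issue of infinite sums arises.
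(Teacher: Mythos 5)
Your proposal is correct and takes essentially the same route as the paper: the paper's own proof is a two-line reduction of \ref{eq:pixton} to \ref{cor:sumspolynomials}, and your argument simply spells out the same bookkeeping (finiteness of the graph sum, $k$ linear in $A$, each $\psi$-monomial coefficient of $P_0^\Gamma$ being an $S_{A,0}^\Gamma(Q)$ with $Q$ independent of $A$) in more detail.
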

\begin{proof}
By the formula \ref{eq:pixton}, we have that $\DR_g(a)$ is a polynomial in a finite set of decorated strata, the $a_i$, and in terms $S_{A,0}^{\Gamma}(Q)$. The result follows from \ref{cor:sumspolynomials}.
\end{proof}

\bibliographystyle{hep}
\addcontentsline{toc}{section}{References}
\bibliography{references.bib}


\end{document}